 \numberwithin{equation}{section}
\newenvironment{customthm}[1]
  {\innercustomthm}
  {\endinnercustomthm}
\theoremstyle{plain}
\newtheorem{thm}{Theorem}[section]
\newtheorem{lem}[thm]{Lemma}
\newtheorem{prop}[thm]{Proposition}
\newtheorem*{customlem}{Lemma}
\theoremstyle{definition}
\theoremstyle{remark}
\newtheorem{rem}[thm]{Remark}
\newcommand{\R}{\mathbb{R}}
\newcommand{\bp}{\begin{proof}[\ensuremath{\mathbf{Proof}}]}
\newcommand{\bs}{\begin{proof}[\ensuremath{\mathbf{Solution}}]}
\newcommand{\ep}{\end{proof}}
\newcommand{\be}{\begin{equation}}
\newcommand{\ee}{\end{equation}}
\begin{document}

\title{A volume formula for Reuleaux polyhedra}

\author{Ryan Hynd\footnote{Department of Mathematics, University of Pennsylvania. This work was supported in part by NSF grant DMS-2350454.}}

\maketitle

\begin{abstract}
A ball polyhedron is a finite intersection of congruent balls in $\R^3$. These shapes arise in various contexts in discrete and convex geometry.  We focus on Reuleaux polyhedra, the subclass of ball polyhedra whose centers and vertices coincide. Building on Bogosel’s recent work on the volume of Meissner polyhedra, we derive a formula for the volume of Reuleaux polyhedra in terms of their edges.
\end{abstract}

\section{Introduction}
A ball polyhedron is a finite intersection of closed balls in $\R^3$ with the same radius.  These shapes have commonalities and differences with standard convex polyhedra in $\R^3$ as discussed in \cite{MR2333996, MR2593321}. In addition, they have been used to study problems in discrete and convex geometry.  For example, ball polyhedra have been used to find the maximum number of diametric pairs a finite subset of $\R^3$ can have  \cite{Erdos1946, MR87115, MR87116, MR0087117} and to partition a given finite subset of $\R^3$ into four subsets with smaller diameter \cite{AgarwalPach1995,LopezCamposOliverosRamirezAlfonsin2025,HeppesRevesz1956}. They have also been employed to construct constant width polyhedra in $\R^3$ \cite{Meissner, MR4775724,MR3620844}.

\par This note entails a volume computation of a ball polyhedron in which the centers of the balls also are the vertices which appear on its boundary. Our formula is given in terms of the circular edges on the boundary and is specific to this class of shapes.    As a result, it will be necessary to begin with a discussion of how these figures are constructed.  Note that we will only consider intersections of closed balls with radius one as the general case would follow by scaling. 
\\
\par {\bf Reuleaux polyhedra.} Let $X$ be a finite subset of $\R^3$ with diameter one. It is a celebrated theorem of Gr\"unbaum \cite{MR87115}, Heppes \cite{MR87116}, 
and Straszewicz \cite{MR0087117} that if $X$ has $|X|\ge 4$ points, then there are at most $2|X|-2$ pairs $\{x,y\}\subset X$ with
$|x-y|=1$.  When $X$ has $2|X|-2$ diametric pairs, we will say that $X$ is {\it extremal}. The simplest example of an extremal subset is the vertices 
of a regular tetrahedron whose edge lengths are all equal to one.  However, there 
are many other examples as discussed in the references \cite{MR2593321, MR3930585, MR3620844}. 

\par For a given extremal $X\subset \R^3$, we will consider its associated {\it Reuleaux polyhedron}
$$
B(X)=\bigcap_{x\in X}B(x). 
$$
Here and throughout $B(x)\subset \R^3$ will denote the closed unit ball centered at $x$. The faces of $B(X)$ are the subsets 
$$
\partial B(x)\cap B(X), \quad x\in X
$$
of its boundary. There is one face for each $x\in X$, and each face is a geodesically convex subset of its respective sphere.  

\par The {\it principal vertices} of $B(X)$ are boundary points  which belong to at least three faces.  The {\it dangling vertices} are points $x\in X$ which belong to exactly two faces. The {\it vertices} of $B(X)$ is the union of principal and dangling vertices. A fundamental result due to
Kupitz, Martini, and Perles is that the set of vertices of $B(X)$ coincides with the set of centers $X$ \cite{MR2593321}. 
\par The {\it edges} of $B(X)$ are connected components of 
$$
\partial B(x)\cap \partial B(y)\cap B(X)\setminus X
$$
as $x,y$ range over $X$.  It can be shown that each edge is a circular arc of radius less than one and the endpoints of each edge are vertices.  
Moreover, if 
$e\subset \partial B(x)\cap \partial B(y)$ is an edge with endpoints $x',y'\in X$, then there is a unique edge 
$e'\subset \partial B(x')\cap \partial B(y')$ with endpoints $x,y\in X$ (Theorem 8.1 of \cite{MR2593321}). In this case, we'll say that $(e,e')$ is a   {\it dual edge pair}. 
Finally, a simple application of the Euler formula (Proposition 6.2 of \cite{MR2593321}) for ball polyhedron gives that $B(X)$ has $|X|-1$ dual edge pairs. 
\\
 \begin{figure}[h]
\centering
 \includegraphics[width=.36\textwidth]{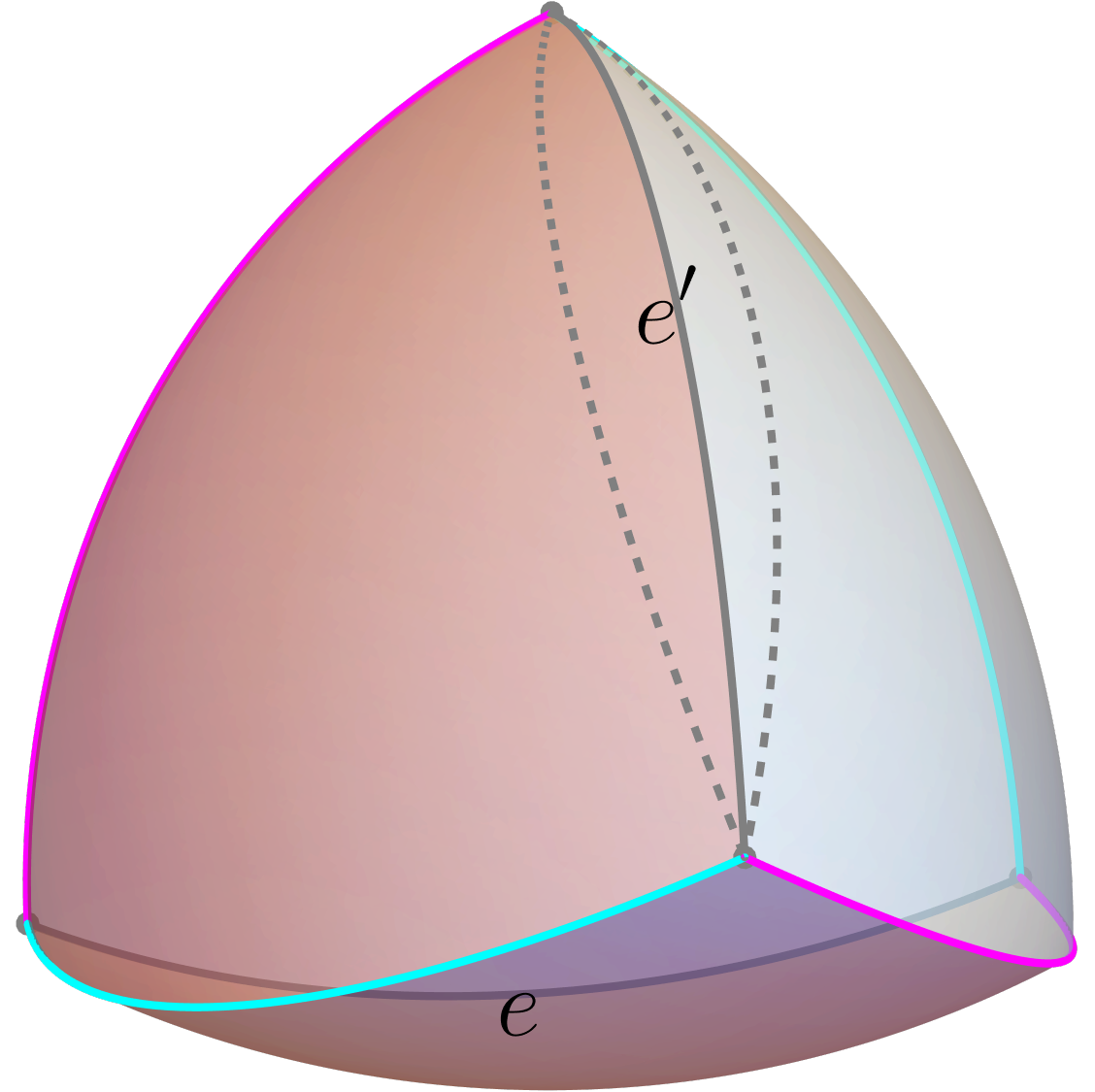}
 \hspace{.4in}
  \includegraphics[width=.36\textwidth]{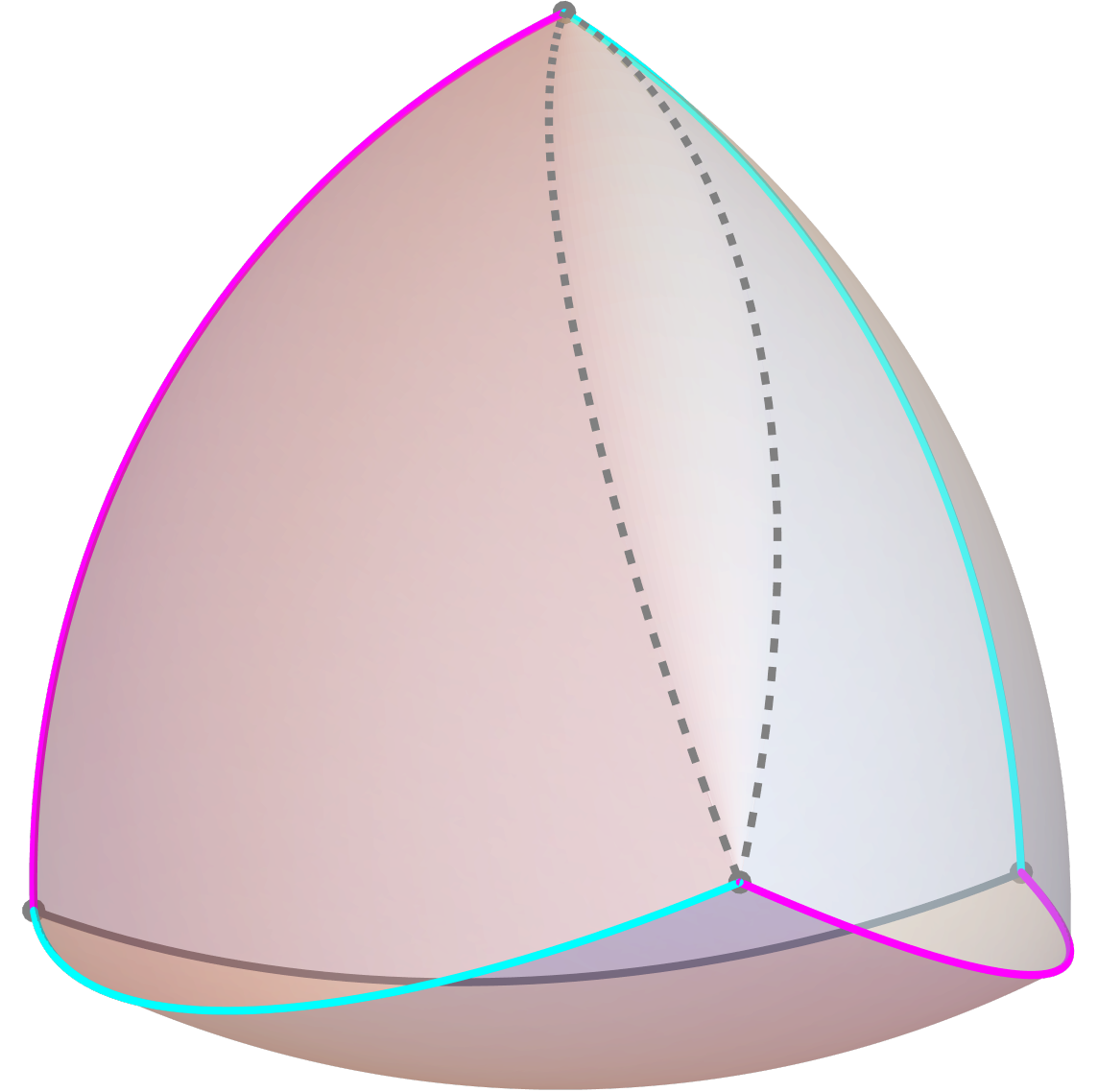}
 \caption{On the left is a Reuleaux polyhedron $B(X)$, where $X$ is the vertices of a regular tetrahedron with side length one. Note that there are three dual edge pairs, and each pair is labeled bold with the same color. For example, the pair $(e,e')$ is labeled in gray. Also note that the endpoints of the edge $e'$ are joined by two dashed geodesics in their respective spheres. On the right is $B(X\cup e)$, which is the figure obtained by performing surgery on $B(X)$ near $e'$. In particular, this is the figure whose boundary is obtained by cutting out the region of $\partial B(X)$ bounded by the two geodesics (which contains $e'$) and rotating one geodesic into the other about the line passing through the endpoints of $e'$. }\label{ReulFig}
\end{figure}
\par {\bf Meissner polyhedra.}  Let us suppose that $X$ is extremal and that $(e,e')$ is a dual edge pair of $B(X)$. In addition, 
let us denote the endpoints of $e$ as $b,c\in X$ and the endpoints of $e'$ as $b',c'\in X$. The geodesic $\gamma_b\subset \partial B(b)$ 
which joins $b'$ to $c'$ is also included in $\partial B(b)\cap B(X)$, as this face is geodesically convex in $\partial B(b).$ Likewise, 
the geodesic $\gamma_c\subset \partial B(c)$ 
which joins $b'$ to $c'$ is included in $\partial B(c)\cap B(X)$. It turns out that if we remove the portion of $\partial B(X)$ between $e'$ and $\gamma_b$ 
and $e'$ and $\gamma_c$ and replace it with the surface obtained by rotating $\gamma_b$ into $\gamma_c$ about the line passing through $b'$ and $c'$, the corresponding surface bounds 
the convex body $B(X\cup e)$ (as discussed in section 4 of \cite{MR4775724}).  See Figure \ref{ReulFig}.  We interpret $B(X\cup e)$ as the body obtained by performing surgery on $B(X)$ near $e'$.

\par Set $m=|X|$. The convex body 
\be\label{MeissPolyExpr}
M=B(X\cup e_1\cup\dots \cup e_{m-1})
\ee
is known as a {\it Meissner polyhedron}. As discussed above, $M$ is obtained from $B(X)$ by performing surgery on $B(X)$ near the edges $e_1',\dots, e_{m-1}'$; see Figure \ref{MeissFig} for an example.  An important 
observation, due to Meissner and Schilling \cite{Meissner} when $X$ is the vertices of regular tetrahedron and by Montejano and Rold\'an-Pensado \cite{MR3620844} for general $X$, is that $M$ has constant width equal to one. That is, the distance between any pair of parallel supporting planes to $M$ is equal to one.  
 \begin{figure}[h]
\centering
 \includegraphics[width=.36\textwidth]{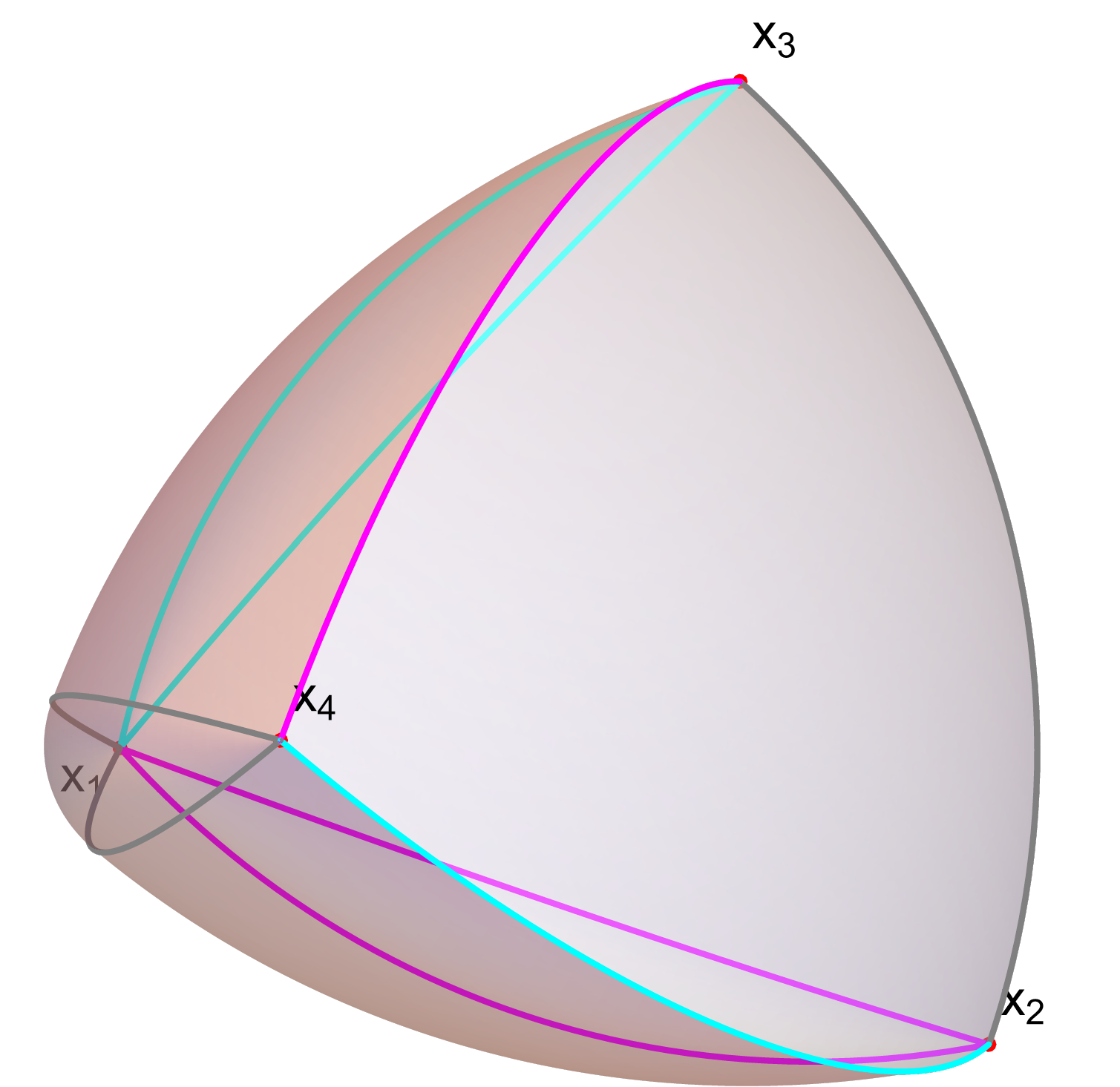}
 \hspace{.4in}
  \includegraphics[width=.36\textwidth]{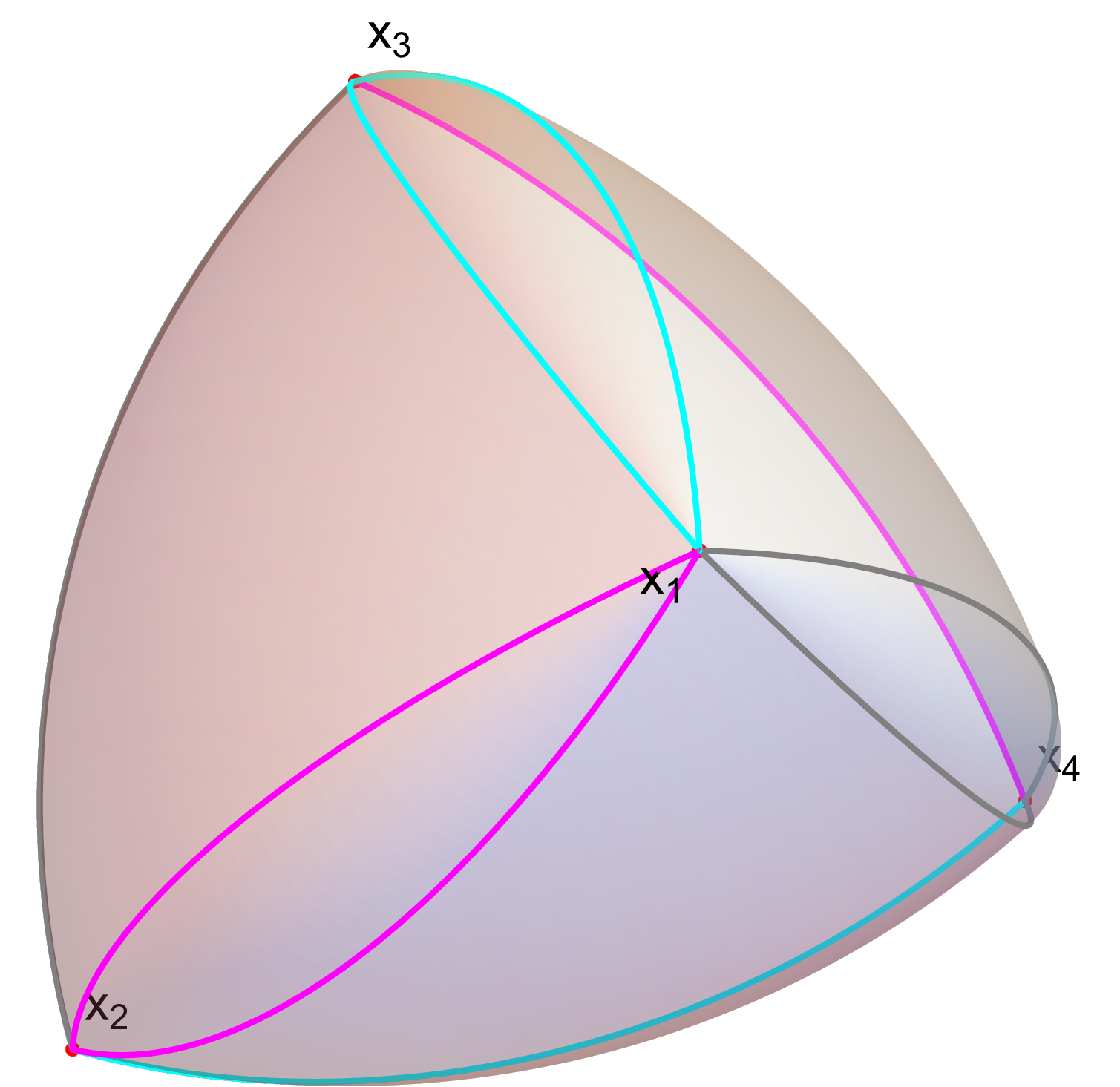}
 \caption{These are two views of a Meissner polyhedron $M=B(X\cup e_1\cup e_2\cup e_3)$, where $X$ is the vertices of a regular tetrahedron. The dual edge pairs are $(e_1,e_1'), (e_2,e_2')$, and $(e_3,e_3')$, and $M$ is obtained from $B(X)$ by performing surgery near the edges $e_1', e_2'$, and $e_3'$.  This is a constant width body in $\R^3$, and its volume can be computed with formula \eqref{BogoselVolForm}. }\label{MeissFig}
\end{figure}
\par {\bf Bogosel's formulae.}  A basic problem is to compute the volume of the Meissner polyhedron $M$ specified in \eqref{MeissPolyExpr}. This is was accomplished recently by Bogosel \cite{Bogosel2024Meissner}, who first showed that the surface area of $M$ is given by 
$$
S(M)=2\pi-\sum^{m-1}_{i=1}f(\theta(e_i),\theta(e_i')).
$$
Here
$$
f(\theta,\theta')=2 \sin^{-1}\left(\frac{\sin(\theta/2)}{\cos(\theta'/2)}\right)\theta'\cos(\theta'/2),
$$
and $\theta(e),\theta(e')\in (0,\pi/3]$ are the lengths of geodesics which join the endpoints of the dual edges $e,e'$ in either of the spheres they respectively border.  See \eqref{thetaDefn} and \eqref{thetaPrimeDefn} below for precise definitions. 

\par Next, Bogosel applied Blaschke's relation 
\be\label{BlaschkeRelation}
V(M)=\frac{1}{2}S(M)-\frac{\pi}{3}
\ee
to conclude the volume formula
\be\label{BogoselVolForm}
V(M)=\frac{2\pi}{3}-\frac{1}{2}\sum^{m-1}_{i=1}f(\theta(e_i),\theta(e_i')).
\ee
He also mentioned in passing that the surface area of $B(X)$ is
$$
S(B(X))=2\pi-\sum^{m-1}_{i=1}g(\theta(e_i),\theta(e_i')),
$$
where 
\begin{align}
g(\theta,\theta')&=4\Bigg\{\sin^{-1}\left(\frac{\sin(\theta/2)}{\cos(\theta'/2)}\right)\sin(\theta'/2)   +
\sin^{-1}\left(\frac{\sin(\theta'/2)}{\cos(\theta/2)}\right)\sin(\theta/2) \\
&\hspace{1in}-\sin^{-1}\Big(\tan(\theta/2)\tan(\theta'/2)\Big)\Bigg\}.
\end{align}
\par {\bf Main result}. In what follows, we will derive a formula for the volume of $B(X)$ which is in analogy with the formulae above. 
This formula involves the function
\begin{align}\label{hFunction}
h(\theta,\theta')&=4\Bigg\{ \sin^{-1}\left(\frac{\sin(\theta'/2)}{\cos(\theta/2)}\right)\left(\sin(\theta/2)-\frac{1}{3}\sin(\theta/2)^3\right)   \\
&\hspace{.3in} +\sin^{-1}\left(\frac{\sin(\theta/2)}{\cos(\theta'/2)}\right)\left(\sin(\theta'/2)-\frac{1}{3}\sin(\theta'/2)^3\right) \\
&\hspace{.3in}  -\frac{2}{3}\sin^{-1}\Big(\tan(\theta/2)\tan(\theta'/2)\Big) -\frac{1}{3}\sin(\theta'/2)\sin(\theta/2)\sqrt{1-\sin(\theta/2)^2-\sin(\theta'/2)^2}\Bigg\}.
\end{align}
 \begin{customthm}{(Volume Formula)}\label{mainthm}
The volume of $B(X)$ is given by
\be\label{VolumeFormula}
V(B(X))=\frac{2\pi}{3}-\frac{1}{2}\sum^{m-1}_{i=1}h(\theta(e_i),\theta(e_i')).
\ee
\end{customthm}

\par For example, suppose $Z$ is the collection of vertices of a regular tetrahedron with side lengths equal to one. As the faces of this tetrahedron 
are equilateral triangles, $\theta(e)=\theta(e')=\pi/3$ for each dual edge pair $(e,e')$ of $B(Z).$ Therefore, 
the volume of $B(Z)$ is equal to $2\pi/3-(3/2)h(\pi/3,\pi/3)$. Using standard trigonometric identities, we 
additionally find 
$$
V(B(Z))=\frac{8 \pi}{3}+\frac{\sqrt{2}}{4}-\frac{27}{4}  \cos^{-1}\left(\frac{1}{3}\right).
$$
This formula was first derived using the symmetry of the figure by Harbourne \cite{harbourne}. 

\par We will also explain below that 
$h(\theta,\theta')>g(\theta,\theta')$ for $\theta,\theta'\in (0,\pi/3]$. Combining this observation with Bogosel's 
surface area formula implies 
\be\label{AlmostBlaschkeIneq}
V(B(X))<\frac{1}{2}S(B(X))-\frac{\pi}{3}
\ee
for all Reuleaux polyhedra. Recall that equality would hold in the above inequality if $B(X)$ is replaced by $M$ as stated in \eqref{BlaschkeRelation}. 
\\ 
\par This paper is organized as follows.  We will discuss our approach to proving \eqref{VolumeFormula} in Section \ref{PrelimSect}.  
This involves the evaluation of various integrals which we will do in Sections \ref{SliverSect} and \ref{SpindleSect}. In Section \ref{ProofSect}, we will summarize our findings and verify the volume formula. Finally, we will prove \eqref{AlmostBlaschkeIneq} in the appendix.

\section{Approach}\label{PrelimSect}
We will now describe a basic approach to establishing our volume formula \eqref{VolumeFormula}. 
To this end, we will fix an extremal $X\subset \R^3$ with $m\ge 4$ points and label the dual edge pairs  of 
the corresponding Reuleaux polyhedron $B(X)$ as $(e_1,e_1'),\dots, (e_{m-1},e_{m-1}')$.  Again 
we define  the Meissner polyhedron $M$ as \eqref{MeissPolyExpr}; recall this shape is obtained by performing surgery on $B(X)$ by near the edges $e_1',\dots, e_{m-1}'$ Observe that 
the difference of $B(X)$ and the interior of $M$ may be expressed as the union 
$$
W(e_1')\cup \dots \cup W(e_{m-1}').
$$ 
Here $W(e_1'),\dots, W(e_{m-1}')$ are the {\it wedge regions} which have been cut away 
from $B(X)$ during the surgery process. 

\par As the wedge regions may only overlap at vertices of $B(X)$,
\begin{align}
V(B(X))&=V(M)+\sum^{m-1}_{i=1}V(W(e_i'))\\
&=\frac{2\pi}{3}-\frac{1}{2}\sum^{m-1}_{i=1}f(\theta(e_i),\theta(e_i'))+\sum^{m-1}_{i=1}V(W(e_i'))\\
&=\frac{2\pi}{3}+\sum^{m-1}_{i=1}\left[V(W(e_i'))- \frac{1}{2}f(\theta(e_i),\theta(e_i'))\right].
\end{align}
Therefore, in order to verify \eqref{VolumeFormula}, we only need to prove the following lemma.  
\begin{customlem}[Wedge Lemma]
For any dual edge pair $(e,e')$ of $B(X)$, 
\be\label{CrucialhIdentity}
V(W(e')) = \frac{1}{2}f(\theta(e),\theta(e'))-\frac{1}{2}h(\theta(e),\theta(e')).
\ee
\end{customlem}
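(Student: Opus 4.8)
The plan is to compute $V(W(e'))$ directly in cylindrical coordinates adapted to the dual pair and match the result to $\tfrac12 f-\tfrac12 h$. First I would set coordinates so that the chord through the endpoints $b',c'$ of $e'$ is the $z$-axis and the common perpendicular-bisector plane is $\{z=0\}$. Writing $\beta=\theta(e)/2$ and $\beta'=\theta(e')/2$, one checks from $|b-b'|=|b-c'|=|c-b'|=|c-c'|=1$ (the endpoints lie on each other's spheres) that $b'=(0,0,\sin\beta')$, $c'=(0,0,-\sin\beta')$, while $b,c$ sit in $\{z=0\}$ on the circle of radius $\cos\beta'$ at azimuthal angles $\pm\phi$, where $\sin\phi=\sin\beta/\cos\beta'$. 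I would record two auxiliary angles that must appear in the answer, $\phi=\sin^{-1}(\sin\beta/\cos\beta')$ and $\psi=\sin^{-1}(\sin\beta'/\cos\beta)$.

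The key geometric observation is that each geodesic $\gamma_b,\gamma_c$ lies in a half-plane through the $z$-axis (its plane has the horizontal normal $(\sin\phi,\mp\cos\phi,0)$), so the surface obtained by rotating $\gamma_b$ into $\gamma_c$ is a genuine surface of revolution about the $z$-axis, with meridian profile $r_{\mathrm{in}}(z)=\sqrt{1-z^2}-\cos\beta'$ for $|z|\le\sin\beta'$. Hence $W(e')$ is the region swept over azimuth $\theta\in[\pi-\phi,\pi+\phi]$ bounded inside by this spindle profile and outside by the spheres $\partial B(b),\partial B(c)$, whose cylindrical radii are $r_{\mathrm{out}}(\theta,z)=\cos\beta'\cos(\theta+\phi)+\sqrt{1-z^2-\cos^2\beta'\sin^2(\theta+\phi)}$ on the relevant side (and symmetrically on the other). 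By the reflection symmetry $\theta\mapsto 2\pi-\theta$ it suffices to treat one side and double, so the volume reduces to $\iint\tfrac12(r_{\mathrm{out}}^2-r_{\mathrm{in}}^2)\,dz\,d\theta$. I would split this into the spindle contribution (the $r_{\mathrm{in}}^2$ term) and the sliver contribution (the $r_{\mathrm{out}}^2$ term); these are precisely the two integrals handled in Sections~\ref{SliverSect} and~\ref{SpindleSect}.

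The spindle contribution is elementary: since $r_{\mathrm{in}}$ is independent of $\theta$, it equals $\phi\int_{-\sin\beta'}^{\sin\beta'}(\sqrt{1-z^2}-\cos\beta')^2\,dz$, which the standard antiderivative of $\sqrt{1-z^2}$ evaluates to $2\phi(\sin\beta'-\tfrac13\sin^3\beta')-\tfrac12 f(\theta(e),\theta(e'))$, using $f=4\phi\beta'\cos\beta'$. The real work is the sliver contribution. Carrying out the inner $z$-integral of $r_{\mathrm{out}}^2$ over $[-\sin\beta',\sin\beta']$ produces, besides polynomial-in-$\sin\beta'$ terms, a term proportional to $\cos(\theta+\phi)\,(1-\cos^2\beta'\sin^2(\theta+\phi))\,\sin^{-1}\!\big(\sin\beta'/\sqrt{1-\cos^2\beta'\sin^2(\theta+\phi)}\big)$. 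For the remaining azimuthal integral the plan is integration by parts: the polynomial factor integrates to $\cos\beta'\sin v-\tfrac13\cos^3\beta'\sin^3 v$ (with $v$ the shifted azimuth), and the boundary term at the edge collapses—because $\cos\beta'\sin\phi=\sin\beta$ and the arcsine argument reduces to $\sin\beta'/\cos\beta$—to exactly $\psi(\sin\beta-\tfrac13\sin^3\beta)$, while the boundary term at the geodesic vanishes.

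The one genuinely nontrivial step is the leftover integral from the by-parts, which after a partial-fraction reduction in $\sin^2 v$ is a constant multiple of $\int_0^\phi dv/(1-\cos^2\beta'\sin^2 v)$; this evaluates to $\sin^{-1}(\tan\beta\tan\beta')$ via the substitution $w=\tan v$ together with the identity $\arctan(\sin\beta'\tan\phi)=\sin^{-1}(\tan\beta\tan\beta')$, which one verifies by checking $\cos^2\beta'-\sin^2\beta=\cos^2\beta\cos^2\beta'-\sin^2\beta\sin^2\beta'$. Assembling the pieces, the purely $\phi$-linear terms cancel identically because $\sin^2\beta'+\cos^2\beta'=1$, the surviving $\sin\phi\cos\phi$ term becomes $\tfrac23\sin\beta\sin\beta'\sqrt{1-\sin^2\beta-\sin^2\beta'}$ (using $\cos\beta'\cos\phi=\sqrt{1-\sin^2\beta-\sin^2\beta'}$), and subtracting the spindle contribution from the sliver contribution leaves exactly $\tfrac12 f-\tfrac12 h$, which is \eqref{CrucialhIdentity}. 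I expect this azimuthal integral—both the integration by parts and pinning down the $\sin^{-1}(\tan\beta\tan\beta')$ identity—to be the main obstacle; everything else is bookkeeping of elementary integrals.
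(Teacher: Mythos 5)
Your proof is correct, but it takes a genuinely different route from the paper's. The paper computes $V(W(e'))$ via the divergence theorem, $V(W(e'))=\tfrac13\int_{\partial W(e')}x\cdot n\,dS$, and evaluates three separate surface integrals over $R_b$, $R_c$, $R_s$ using explicit parametrizations (spherical coordinates centered at $b$ and $c$ for the slivers, and a rotation-of-the-geodesic parametrization $X(s,t)$ for the spindle piece, whose normal and area element must be worked out). You instead integrate the volume directly in cylindrical coordinates about the axis through $b'$ and $c'$, exploiting the facts that the spindle surface is a surface of revolution about that axis and that $\gamma_b,\gamma_c$ lie in half-planes through it, so that $V(W(e'))=\iint\tfrac12(r_{\mathrm{out}}^2-r_{\mathrm{in}}^2)\,dz\,d\alpha$. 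This buys you a trivial spindle contribution (a one-variable integral, since $r_{\mathrm{in}}$ is independent of azimuth) and no surface-normal bookkeeping, at the cost of a slightly more involved sliver term, where your azimuthal integration by parts plus the substitution $w=\tan v$ reproduce the same two transcendental quantities the paper obtains: the boundary term $\psi(\sin\beta-\tfrac13\sin^3\beta)$ and the $\sin^{-1}(\tan\beta\tan\beta')$ integral (the latter appearing in the paper inside $S(R_c)$). I checked the assembly: with your $\phi=\phi(e')/2$ and $\psi=\phi(e)/2$, the $\phi$-linear terms cancel via $1-\sin^2\beta'-\cos^2\beta'=0$, the $\sin\phi\cos\phi$ term reduces to $\tfrac13\sin\beta\sin\beta'\sqrt{1-\sin^2\beta-\sin^2\beta'}$ per half-wedge, and the total matches $\tfrac12 f-\tfrac12 h$. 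The one point worth making explicit in a full write-up is that each radial ray from the axis meets $W(e')$ in the single interval $[r_{\mathrm{in}},r_{\mathrm{out}}]$, with the outer boundary given by $\partial B(b)$ on one azimuthal half and $\partial B(c)$ on the other, switching across the half-plane containing $e'$; this is the geometric fact your reduction to a double integral relies on, and it follows from the description of $R_b,R_c,R_s$ but deserves a sentence.
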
 
\noindent  This will be a standard computation once we specify wedge regions and other data involved in this problem. 
\\
\par {\bf Wedge region.} Let us fix a dual edge pair $(e,e')$ of $B(X)$. We will denote the endpoints of $e$ as $b, c\in X$ and the endpoints of 
$e'$ as  $b', c'\in X$. We may also suppose  
\be\label{OrientedPoints}
\left(b'-\frac{b+c}{2} \right)\times \left(c'-\frac{b+c}{2} \right)\cdot (b-c)>0
\ee
as indicated in Figure \ref{WedgeFig}. Consider the geodesic $\gamma_b\subset \partial B(b)$ which joins $b'$ and $c'$ and the geodesic $\gamma_c\subset \partial B(c)$ which joins $b'$ and $c'$. Denote $R_b\subset \partial B(b)$ as the surface bounded by $\gamma_b$ and $e'$, $R_c\subset\partial B(c)$ as the surface bounded by $\gamma_c$ and $e'$, and $R_s$ as the surface obtained by rotating $\gamma_b$ into $\gamma_c$ counterclockwise about the line which passes through $b'$ and $c'$ with slope $b'-c'$. Also note  $W(e')$ is the region bounded by $R_b$, $R_c$ and $R_s$.  

\begin{figure}[h]
\centering
 \includegraphics[width=.7\textwidth]{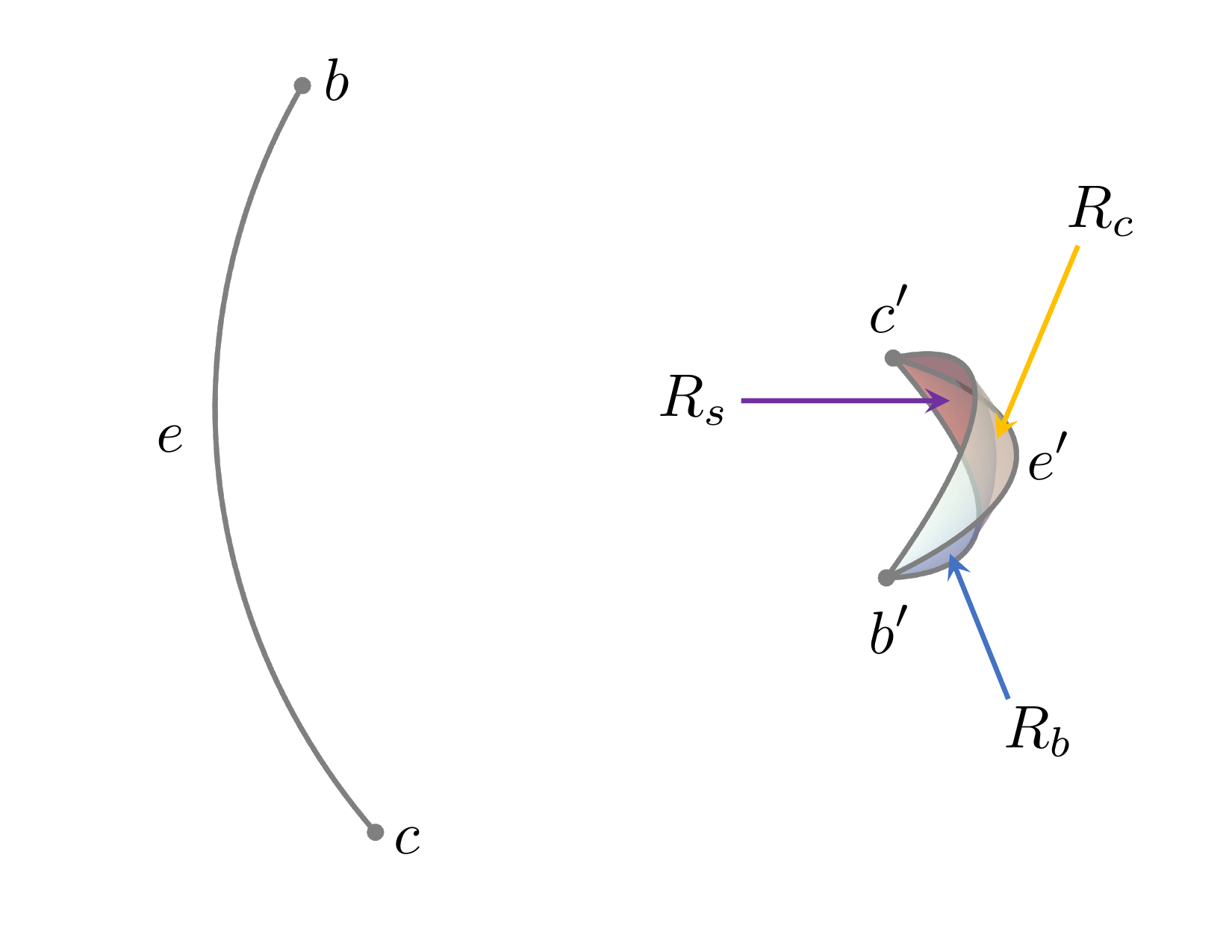}
 \caption{This is a diagram of a dual curve pair $(e,e')$ and associated wedge region $W(e')$ which is bounded by the three surfaces $R_b, R_c$ and $R_s$. }
 \label{WedgeFig}
\end{figure}
\par Employing \eqref{OrientedPoints}, it is not hard to check that the surface $R_c$ is the set of $x\in \R^3$ with 
$$
|x-c|=1,\quad \left(x-\frac{b+c}{2}\right)\cdot (b-c)\ge 0, \quad (x-c)\cdot (b'-c)\times (c'-c)\le 0.
$$
Likewise $x\in R_b$ if and only if 
$$
|x-b|=1,\quad \left(x-\frac{b+c}{2}\right)\cdot (b-c)\le 0, \quad (x-b)\cdot (b'-b)\times (c'-b)\ge 0.
$$
It is also evident that $R_b$ is the reflection of $R_c$ about the plane with normal $b-c$ which passes through $(b+c)/2$. 

\par The surface obtained by rotating $\gamma_b$ about the line which passes 
through $b'$ and $c'$ is given by the set of points $x\in \R^3$ which satisfies the equation
\begin{align}\label{spindleSurface}
\left|x-\frac{b'+c'}{2}-\left(\left(x-\frac{b'+c'}{2}\right)\cdot \frac{b'-c'}{|b'-c'|}\right) \frac{b'-c'}{|b'-c'|}\right|
+\sqrt{1-\left|\frac{b'-c'}{2}\right|^2}\\
\hspace{1in}=\sqrt{1-\left(\left(x-\frac{b'+c'}{2}\right)\cdot \frac{b'-c'}{|b'-c'|}\right)^2}
\end{align}
(see section 2 of \cite{MR4775724}). Therefore, $R_s$ is the portion of this surface 
which lies in the intersection of the half-spaces  
\begin{align}\label{spindleCondition}
(x-c)\cdot (b'-c)\times (c'-c)\le 0\quad \text{and}\quad (x-b)\cdot (b'-b)\times (c'-b)\ge 0.
\end{align}

\par {\bf Volume.} Our goal is to compute the volume of $W(e')$. Let us write $n$ for the outward unit normal on the piecewise smooth, oriented surface $\partial W(e')$. By the divergence theorem, 
\begin{align}
V(W(e'))&=\frac{1}{3}\int_{ W(e')}\nabla\cdot xdV\\
&=\frac{1}{3}\int_{\partial W(e')}x\cdot ndS\\
&=\frac{1}{3}\int_{R_b}x\cdot ndS+\frac{1}{3}\int_{R_c}x\cdot ndS+\frac{1}{3}\int_{R_s}x\cdot ndS.
\end{align}
We are left to compute each of the three ``volume" integrals above. We will do so in terms of the angles of interest associated with the dual edge pair $(e,e')$. 
\\
\par {\bf Angles.}  The geodesic distance between the endpoints of $e$ in $\partial B(b')$ or $\partial B(c')$ is
\be\label{thetaDefn}
\theta(e)=2\sin^{-1}\left(\frac{|b-c|}{2}\right).
\ee
It follows that $\theta(e)$ is the angle between $b-b'$ and $c-b'$ also the angle between $b-c'$ and $c-c'$. Likewise
\be\label{thetaPrimeDefn}
\theta(e')=2\sin^{-1}\left(\frac{|b'-c'|}{2}\right)
\ee
is the angle between $b'-b$ and $c'-b$ the angle between $b'-c$ and $c'-c$. 

\par As noted by Bogosel \cite{Bogosel2024Meissner}, the dihedral angle $\phi(e)$ between the plane which contains $b, c$ and $b'$ and 
the plane which contains $b, c$ and $c'$ fulfills 
$$
\sin(\phi(e)/2)=\frac{\sin(\theta(e')/2)}{\cos(\theta(e)/2)}.
$$
Similarly, the  dihedral angle $\phi(e')$ between the plane which contains $b', c'$ and $b$ and 
the plane which contains $b', c'$ and $b$ satisfies 
  $$
\sin(\phi(e')/2)=\frac{\sin(\theta(e)/2)}{\cos(\theta(e')/2)}.
$$
A crucial identity between these angles is 
\be\label{MidPointIdentity}
\cos(\theta(e)/2)\cos(\phi(e)/2)=\cos(\theta(e')/2)\cos(\phi(e')/2).
\ee
This quantity is also equal to the distance between the midpoints of the line segments joining the endpoints of $e$ and $e'$. 
\\
\par {\bf Special coordinates.}  To simplify our computation, we will write 
$$
\theta=\theta(e), \quad \theta'=\theta(e'), \quad 
\phi=\phi(e), \quad \phi'=\phi(e').
$$
Since volume is invariant under orthogonal transformation and translations, we will also assume that the endpoints of $e,e'$ satisfy 
\be\label{SpecialCoordinates}
\begin{cases}
b=ae_3\\
c=-ae_3\\
b'=\sqrt{1-a^2}e_1\\
c'=\sqrt{1-a^2}(\cos\phi, \sin\phi,0).
\end{cases}
\ee
Here we have written $a=|b-c|/2$ and $\{e_1,e_2,e_3\}$ for the standard basis in $\R^3$.\footnote{We hope the context will make it clear whether or not $e_i$ is an edge or a standard unit vector in $\R^3$.} And with this choice of coordinates,  
$$
\theta=2\sin^{-1}(a)\quad \text{and}\quad \theta'=2\sin^{-1}\left(\sqrt{1-a^2}\sin(\phi/2)\right).
$$

\section{Sliver integrals}\label{SliverSect}
We will first evaluate the integral 
$$
\int_{R_c}x\cdot ndS
$$
Note that in the coordinates \eqref{SpecialCoordinates} $R_c$ is given by
$$
|x+ae_3|=1, \quad x\cdot e_3\ge 0,\quad (x+ae_3)\cdot u\le 0.
$$
Here  
\begin{align}
u&=(b'-c)\times (c'-c)\\
&=(\sqrt{1-a^2},0,a)\times (\sqrt{1-a^2}\cos\phi,\sqrt{1-a^2}\sin\phi,a)\\
&=2\sqrt{1-a^2}\sin(\phi/2) (-a\cos(\phi/2),-a\sin(\phi/2),\sqrt{1-a^2}\cos(\phi/2)).
\end{align}

\par We may parametrize $\partial B(-ae_3)$ with spherical coordinates 
$$
Y(\psi,t)=-ae_3+\left(\cos(t)\cos(\psi) , \cos(t)\sin(\psi) ,\sin(t) \right) 
$$
for $\psi\in [0,2\pi]$ and $t\in [-\pi/2,\pi/2]$. In order to parametrize the sliver, we need to specify the $(\psi,t)$ for which
$$
Y(\psi,t)\cdot e_3\ge 0\quad\text{and}\quad (Y(\psi,t)+ae_3)\cdot u\le 0.
$$ 
 The inequality  $Y(\psi,t)\cdot e_3\ge 0$ is equivalent to 
$$ t\ge \sin^{-1}(a).$$  And it is routine to check that $(Y(\psi,t)+ae_3)\cdot u\le 0$ is the same as requiring
$$
t\le \sin^{-1}\left( \frac{a\cos(\psi-\phi/2)}{\sqrt{(1-a^2)\cos(\phi/2)^2+(a\cos(\psi-\phi/2))^2}}\right)
$$
and $0\le \psi\le \phi$.

\subsection{Area integral}
Observe that the 
Jacobian associated with the parametrization $Y(\psi,t)$ is 
$$
|\partial_\psi Y\times \partial_t Y|=\cos(t).
$$
It follows that 
\begin{align}
S(R_c)&=\int^\phi_0\int^{\sin^{-1}\left( \frac{a\cos(\psi-\phi/2)}{\sqrt{(1-a^2)\cos(\phi/2)^2+(a\cos(\psi-\phi/2))^2}}\right)}_{\sin^{-1}a}\cos(t)dtd\psi\\
&=\int^\phi_0 \frac{a\cos(\psi-\phi/2)}{\sqrt{(1-a^2)\cos(\phi/2)^2+(a\cos(\psi-\phi/2))^2}}d\psi-\phi a\\
&=2\int^{\phi/2}_{0} \frac{a\cos(s)}{\sqrt{(1-a^2)\cos(\phi/2)^2+(a\cos(s))^2}}ds-\phi a\\
&=2\left.\sin^{-1}\left(\frac{a\sin(s)}{\sqrt{(1-a^2)\cos(\phi/2)^2+a^2}}\right)\right|^{\phi/2}_0-\phi a\\
&=2\sin^{-1}\left(\frac{a\sin(\phi/2)}{\sqrt{(1-a^2)\cos(\phi/2)^2+a^2}}\right)-\phi a.
\end{align} 

\par Also recall $a=\sin(\theta/2)$ and note 
\begin{align}
(1-a^2)\cos(\phi/2)^2+a^2&=1-(1-a^2)\sin(\phi/2)^2=1-\sin(\theta'/2)^2=\cos(\theta'/2)^2. 
\end{align}
As a result, 
 \begin{align}
 \frac{a\sin(\phi/2)}{\sqrt{(1-a^2)\cos(\phi/2)^2+a^2}}=\frac{\sin(\theta/2)\displaystyle\frac{\sin(\theta'/2)}{\cos(\theta/2)}}{\cos(\theta'/2)}
 =\tan(\theta/2)\tan(\theta'/2).
 \end{align}
 We conclude 
 \begin{align}
 S(R_c)&=2\sin^{-1}\Big(\tan(\theta/2)\tan(\theta'/2)\Big)-\sin(\theta/2)\phi.
 \end{align}
\begin{rem}
This surface area can be derived using the Gauss--Bonnet formula. See Proposition 3.4 of \cite{Bogosel2024Meissner}
and Lemma 8.1 of \cite{MR4883943}. 
\end{rem}
 
\subsection{Volume integral}
Observe that $n=x+ae_3$ on $R_c$, so that  
$$
x\cdot n=(n-ae_3)\cdot n=1-an_3.
$$
As the outer unit normal corresponding to parametrization $Y(\psi,t)$ is 
$$
n\circ Y(\psi,t)=\left(\cos(t)\cos(\psi) , \cos(t)\sin(\psi) ,\sin(t) \right),
$$
we have
$$
Y(\psi,t)\cdot n\circ Y(\psi,t)=1-a\sin(t).
$$
It follows that  
\begin{align}
\int_{R_c}x\cdot ndS&=\int^\phi_0\int^{\sin^{-1}\left( \frac{a\cos(\psi-\phi/2)}{\sqrt{(1-a^2)\cos(\phi/2)^2+(a\cos(\psi-\phi/2))^2}}\right)}_{\sin^{-1}a}(1-a\sin(t))\cos(t)dtd\psi\\
&=S(R_c)-a\int^\phi_0\int^{\sin^{-1}\left( \frac{a\cos(\psi-\phi/2)}{\sqrt{(1-a^2)\cos(\phi/2)^2+(a\cos(\psi-\phi/2))^2}}\right)}_{\sin^{-1}a}\frac{d}{dt}\frac{1}{2}\sin(t)^2dtd\psi\\
&=S(R_c)-\frac{a}{2}\int^\phi_0\left[\frac{a^2\cos(\psi-\phi/2)^2}{(1-a^2)\cos(\phi/2)^2+(a\cos(\psi-\phi/2))^2} -a^2\right]d\psi\\
&=S(R_c)-a\int^{\phi/2}_{0}\left[\frac{a^2\cos(s)^2}{(1-a^2)\cos(\phi/2)^2+(a\cos(s))^2} -a^2\right]ds\\
&=S(R_c)-a\int^{\phi/2}_{0}\left[1 -a^2-\frac{(1-a^2)\cos(\phi/2)^2}{(1-a^2)\cos(\phi/2)^2+(a\cos(s))^2}\right]ds\\
&=S(R_c)-a(1-a^2)\frac{\phi}{2}+a\int^{\phi/2}_{0}\frac{(1-a^2)\cos(\phi/2)^2}{(1-a^2)\cos(\phi/2)^2+(a\cos(s))^2}ds.
\end{align} 

\par Direct computation shows 
\begin{align}
&\frac{d}{ds}\left[\frac{\sqrt{1-a^2}\cos(\phi/2)}{\sqrt{a^2+(1-a^2)\cos(\phi/2)^2}}\tan^{-1}\left(\tan(s)\frac{\sqrt{1-a^2}\cos(\phi/2)}{\sqrt{a^2+(1-a^2)\cos(\phi/2)^2}}\right)\right]\\
&\hspace{1in} = \frac{(1-a^2)\cos(\phi/2)^2}{(1-a^2)\cos(\phi/2)^2+(a\cos(s))^2}.
\end{align} 
Consequently,  
\begin{align}
&\int^{\phi/2}_{0}\frac{(1-a^2)\cos(\phi/2)^2}{(1-a^2)\cos(\phi/2)^2+(a\cos(s))^2}ds\\
&\quad = \frac{\sqrt{1-a^2}\cos(\phi/2)}{\sqrt{a^2+(1-a^2)\cos(\phi/2)^2}}\tan^{-1}\left(\tan(\phi/2)\frac{\sqrt{1-a^2}\cos(\phi/2)}{\sqrt{a^2+(1-a^2)\cos(\phi/2)^2}}\right)\\
&\quad = \frac{\sqrt{1-a^2}\cos(\phi/2)}{\sqrt{a^2+(1-a^2)\cos(\phi/2)^2}}\tan^{-1}\left(\frac{\sqrt{1-a^2}\sin(\phi/2)}{\sqrt{a^2+(1-a^2)\cos(\phi/2)^2}}\right)\\
&\quad = \frac{\cos(\theta/2)\cos(\phi/2)}{\cos(\theta'/2)}\tan^{-1}\left(\frac{\cos(\theta/2)\sin(\phi/2)}{\cos(\theta'/2)}\right)\\
&\quad = \frac{\cos(\theta/2)\cos(\phi/2)}{\cos(\theta'/2)}\tan^{-1}\left(\frac{\sin(\theta'/2)}{\cos(\theta'/2)}\right)\\
&\quad =\cos(\phi'/2)\; \theta'/2.
\end{align} 
Here we used \eqref{MidPointIdentity}. In summary, 
\begin{align}\label{IntComp1}
\int_{R_c}x\cdot ndS&=S(R_c)-a(1-a^2)\frac{\phi}{2}+a\cos(\phi'/2)\;\frac{ \theta'}{2}\nonumber \\
&=2\sin^{-1}\Big(\tan(\theta/2)\tan(\theta'/2)\Big)-\phi\sin(\theta/2) \\
&\hspace{1in}-\sin(\theta/2)(1-\sin(\theta/2)^2)\frac{\phi}{2}+\sin(\theta/2)\cos(\phi'/2)\; \frac{ \theta'}{2}\\
&=2\sin^{-1}\Big(\tan(\theta/2)\tan(\theta'/2)\Big)-\frac{3}{2}\phi\sin(\theta/2)+\sin(\theta/2)^3\frac{\phi}{2}+\sin(\theta/2)\cos(\phi'/2)\frac{\theta'}{2}.
\end{align} 

\subsection{Other volume integral}
In the coordinates \eqref{SpecialCoordinates}, the sliver region $R_b$ is given by
$$
|x-ae_3|=1, \quad x\cdot e_3\le 0,\quad (x-ae_3)\cdot v\ge 0.
$$
Here
\begin{align}
v&=(b'-b)\times (c'-b)\\
&=(\sqrt{1-a^2},0,-a)\times (\sqrt{1-a^2}\cos\phi,\sqrt{1-a^2}\sin\phi,-a)\\
&=2\sqrt{1-a^2}\sin(\phi/2) (a\cos(\phi/2),a\sin(\phi/2),\sqrt{1-a^2}\cos(\phi/2)).
\end{align}
As we argued above, this region can be parametrized with 
$$
Z(\psi,t)=ae_3+\left(\cos(t)\cos(\psi) , \cos(t)\sin(\psi) ,\sin(t) \right) 
$$
for 
$$
- \sin^{-1}\left( \frac{a\cos(\psi-\phi/2)}{\sqrt{(1-a^2)\cos(\phi/2)^2+(a\cos(\psi-\phi/2))^2}}\right)\le t\le -\sin^{-1}(a)
$$
and
$$
0\le\psi\le\phi.
$$

\par Since $n=x-ae_3$ for $x\in R_b$, 
$$
x\cdot n=(n+ae_3)\cdot n=1+an_3.
$$
It follows that
\begin{align}
\int_{R_b}x\cdot ndS&=\int_{R_b}(1+an_3)dS\\
&=\int^\phi_0\int^{-\sin^{-1}(a)}_{-\sin^{-1}\left( \frac{a\cos(\psi-\phi/2)}{\sqrt{(1-a^2)\cos(\phi/2)^2+(a\cos(\psi-\phi/2))^2}}\right)}(1+a\sin(t))\cos(t)dtd\psi\\
&=\int^\phi_0\int^{\sin^{-1}\left( \frac{a\cos(\psi-\phi/2)}{\sqrt{(1-a^2)\cos(\phi/2)^2+(a\cos(\psi-\phi/2))^2}}\right)}_{\sin^{-1}(a)}(1-a\sin(t))\cos(t)dtd\psi.
\end{align}
Therefore, 
\be\label{IntComp2}
\int_{R_b}x\cdot ndS=\int_{R_c}x\cdot ndS.
\ee


\section{Spindle integrals}\label{SpindleSect}
We now aim to evaluate the integral 
$$
\int_{R_s}x\cdot ndS. 
$$
Recall that $R_s$ is the surface obtained by rotating the geodesic $\gamma_b\subset \partial B(b)$ into the geodesic $\gamma_c\subset \partial B(c)$ counterclockwise about the line which passes through $b'$ and $c'$ with slope $b'-c'$. We will argue analogous to how we did above by finding an appropriate parametrization.  We will initially do this in general and then specialize to the specific coordinate system \eqref{SpecialCoordinates}. 
\\
\par {\bf Geodesics.} First note that we may parametrize $\gamma_b$ as 
$$
\gamma_b(t)=b+\cos(t)(b'-b)+\sin(t)\left(\frac{c'-b-\cos(\theta')(b'-b)}{\sin(\theta')}\right)
$$
for $0\le t\le \theta'$. Observe that $\gamma_b(0)=b'$, $\gamma_b(\theta')=c'$, and $|\dot\gamma_b(t)|=1$ for $t\in [0,\theta']$. We may also parametrize $\gamma_c$ as 
$$
\gamma_c(t)=c+\cos(t)(b'-c)+\sin(t)\left(\frac{c'-c-\cos(\theta')(b'-c)}{\sin(\theta')}\right)
$$
for $0\le t\le \theta'$. This is also a constant speed geodesic which starts at $b'$ and ends at $c'$.  
\\
\par {\bf The edge $e$.} The edge $e$ is an arc of the circle $\partial B(b')\cap \partial B(c')$ given by the equations 
$$
\left|x-\frac{b'+c'}{2}\right|=\sqrt{1-\left|\frac{b'-c'}{2}\right|^2}=\cos(\theta'/2)
$$
and 
$$
\left(x-\frac{b'+c'}{2}\right)\cdot (b'-c')=0
$$
for $x\in \R^3$. Consequently, $e$   may be parametrized as

$$
\eta(s)=\frac{b'+c'}{2}+\cos(s)\left(b-\frac{b'+c'}{2}\right) +
\sin(s)\;v\times \left(b-\frac{b'+c'}{2}\right)
$$
for $0\le s\le \phi'$. Here we have set 
$$
v=\frac{b'-c'}{|b'-c'|}.
$$
Note $\eta(0)=b$, and \eqref{OrientedPoints} can be used to check that $\eta(\phi')=c$.

\par Observe that $\eta(s)-(b'+c')/2$ is the counterclockwise rotation of $b-(b'+c')/2$ $s$ units about 
the line which passes through $b'$ and $c'$ in the $v$ direction. This can also be seen from the identity
\be\label{etaDerivative}
\eta'(s)=v\times \left(\eta(s)-\frac{b'+c'}{2}\right),
\ee
which in turn shows $\eta$ has constant speed
$$
|\dot \eta(s)|=\cos(\theta'/2)
$$
for $0\le s\le \phi'$. 
\\
\par {\bf Parametrization.} Define the mapping
$$
X(s,t)=\eta(s)+\cos(t)(b'-\eta(s))+\sin(t)\left(\frac{c'-\eta(s)-\cos(\theta')(b'-\eta(s))}{\sin(\theta')}\right)
$$
 for $(s,t)\in [0,\phi']\times[0,\theta']$. We claim this is a parametrization of $R_s$.  Clearly 
$$
X(0,t)=\gamma_b(t)\quad\text{and}\quad X(\theta',t)=\gamma_c(t)
$$
for $t\in [0,\theta']$. 
\par Next we observe that  
$$
b'-\eta(s)=\frac{b'-c'}{2}-\left(\eta(s)-\frac{b'+c'}{2}\right)=\sin(\theta'/2)v-\left(\eta(s)-\frac{b'+c'}{2}\right)
$$
and
$$
c'-\eta(s)=-\frac{b'-c'}{2}-\left(\eta(s)-\frac{b'+c'}{2}\right)=-\sin(\theta'/2)v-\left(\eta(s)-\frac{b'+c'}{2}\right)
$$
for $0\le s\le \phi'$.  A routine manipulation then gives the identity 
\be\label{XparamIdentity}
 X(s,t)-\frac{b'+c'}{2}=-\sin(t-\theta'/2)v-\frac{\cos(t-\theta'/2)-\cos(\theta'/2)}{\cos(\theta'/2)}\left(\eta(s)-\frac{b'+c'}{2}\right).
\ee
\par It follows that $ X(s,t)-(b'+c')/2$ is the counterclockwise rotation of $\gamma_b(t)-(b'+c')/2$ $s$ units about 
the line which passes through $b'$ and $c'$ in the $v$ direction. As a result, the mapping $X$ parametrizes $R_s$. It is also possible to check directly that $X(s,t)$ satisfies equation \eqref{spindleSurface} and inequalities \eqref{spindleCondition} 
 for $(s,t)\in [0,\phi']\times[0,\theta']$.
\\
\par {\bf Surface area element.}  In view of \eqref{XparamIdentity},  
\be
\partial_sX(s,t)=-\frac{\cos(t-\theta'/2)-\cos(\theta'/2)}{\cos(\theta'/2)}\eta'(s)
\ee
and 
\be
 \partial_tX(s,t)=-\cos(t-\theta'/2)v+\frac{\sin(t-\theta'/2)}{\cos(\theta'/2)}\left(\eta(s)-\frac{b'+c'}{2}\right).
\ee
By \eqref{etaDerivative}, 
$$
\partial_sX(s,t)\cdot \partial_tX(s,t)=0.
$$ 
Therefore, 
$$
|\partial_sX(s,t)\times \partial_tX(s,t)|=\cos(t-\theta'/2)-\cos(\theta'/2).
$$
In particular, we note 
\begin{align}
S(R_s)=\int^{\phi'}_0\int^{\theta'}_0|\partial_sX(s,t)\times \partial_tX(s,t)|dtds=2\phi'\Big(\sin(\theta'/2)-\cos(\theta'/2)\;\theta'/2\Big).
\end{align}
\\
\par {\bf Unit normal.}  Recall that $\eta(s)-(b'+c')/2$ is orthogonal to $v$ and the length of $\eta(s)-(b'+c')/2$ is $\cos(\theta'/2)$ for each $s\in [0,\phi']$.  It follows that 
$$
\eta'(s)\times v=\left[v\times \left(\eta(s)-\frac{b'+c'}{2}\right)\right] \times v=\eta(s)-\frac{b'+c'}{2}
$$
and 
$$
\eta'(s)\times\left(\eta(s)-\frac{b'+c'}{2}\right)= \left[v\times \left(\eta(s)-\frac{b'+c'}{2}\right)\right] \times \left(\eta(s)-\frac{b'+c'}{2}\right)=-\cos(\theta'/2)^2v.
$$
As $[0,\theta']\ni t\mapsto X(s,t)$ traces a geodesic in $\partial B(\eta(s))$ from $b'$ to $c'$ and $[0,\phi']\ni s\mapsto X(s,t)$ rotates 
$\gamma_b(t)$ counterclockwise around the line passing through $b'$ and $c'$ in the direction $v$, 
\begin{align}
n\circ X(s,t)&=\frac{\partial_sX(s,t)\times \partial_tX(s,t)}{|\partial_sX(s,t)\times \partial_tX(s,t)|}\\
&=-\frac{\eta'(s)}{\cos(\theta'/2)}\times \left(-\cos(t-\theta'/2)v+\frac{\sin(t-\theta'/2)}{\cos(\theta'/2)}\left(\eta(s)-\frac{b'+c'}{2}\right)\right)\\
&=\frac{\cos(t-\theta'/2)}{\cos(\theta'/2)}\left(\eta(s)-\frac{b'+c'}{2}\right)+\sin(t-\theta'/2)v.
\end{align}
\begin{rem}
This is the outward unit normal to $R_s\subset \partial W(e')$. Note in particular that this is the inward unit normal to the spindle surface described 
by equation \eqref{spindleSurface}.  
\end{rem}

\subsection{Volume integral} 
We will write 
$$
\int_{R_s}x\cdot n dS=\int_{R_s}\left(x-\frac{b'+c'}{2}\right)\cdot n dS+\left(\frac{b'+c'}{2}\right)\cdot\int_{R_s} n dS
$$
and evaluate both integrals separately. First note that 
\begin{align}
&\int_{R_s}\left(x-\frac{b'+c'}{2}\right)\cdot n dS\\
&=\int^{\phi'}_0\int^{\theta'}_0\left(X(s,t)-\frac{b'+c'}{2}\right)\cdot n\circ X(s,t)|\partial_sX(s,t)\times \partial_tX(s,t)|dtds \\
&=\phi'\int^{\theta'}_0\left(-1+\cos(\theta'/2)\cos(t-\theta'/2)\right)(\cos(t-\theta'/2)-\cos(\theta'/2))dt \\
&=-\phi'\Big(3\sin(\theta'/2)-3\cos(\theta'/2)\;\theta'/2 -\sin(\theta'/2)^3\Big).
\end{align}

\par Next observe 
\begin{align}
&\int_{R_s}n dS\\
&= \int^{\phi'}_0\int^{\theta'}_0n\circ X(s,t)|\partial_sX(s,t)\times \partial_tX(s,t)|dtds\\
&= \int^{\phi'}_0\int^{\theta'}_0\left(\frac{\cos(t-\theta'/2)}{\cos(\theta'/2)}\left(\eta(s)-\frac{b'+c'}{2}\right)+\sin(t-\theta'/2)v\right) (\cos(t-\theta'/2)-\cos(\theta'/2))dtds\\
&= \int^{\phi'}_0\int^{\theta'/2}_{-\theta'/2}\left(\frac{\cos(\tau)}{\cos(\theta'/2)}\left(\eta(s)-\frac{b'+c'}{2}\right)+\sin(\tau)v\right) (\cos(\tau)-\cos(\theta'/2))d\tau ds\\
&= \int^{\phi'}_0\int^{\theta'/2}_{-\theta'/2}\left(\frac{\cos(\tau)}{\cos(\theta'/2)}\left(\eta(s)-\frac{b'+c'}{2}\right)\right) (\cos(\tau)-\cos(\theta'/2))d\tau ds\\
&=2\int^{\theta'/2}_{0}\left(\frac{\cos(\tau)^2}{\cos(\theta'/2)}-\cos(\tau)\right)d\tau \int^{\phi'}_0\left(\eta(s)-\frac{b'+c'}{2}\right)  ds.\\
&=\left(\frac{\theta'/2}{\cos(\theta'/2)}-\sin(\theta'/2)\right)\int^{\phi'}_0\left(\eta(s)-\frac{b'+c'}{2}\right)  ds.
\end{align}
We also have 
\begin{align}
&\int^{\phi'}_0\left(\eta(s)-\frac{b'+c'}{2}\right) ds\\
&=\int^{\phi'}_0\cos(s)ds\; \left(b-\frac{b'+c'}{2}\right)
+\int^{\phi'}_0\sin(s)ds\; v\times \left(b-\frac{b'+c'}{2}\right)\\
&=\sin(\phi')\; \left(b-\frac{b'+c'}{2}\right)
+(1-\cos(\phi'))\; v\times \left(b-\frac{b'+c'}{2}\right)\\
&=2\sin(\phi'/2)\left[\cos(\phi'/2) \left(b-\frac{b'+c'}{2}\right)
+\sin(\phi'/2) \; v\times \left(b-\frac{b'+c'}{2}\right)\right].
\end{align}
That is, 
\begin{align}
&\left(\frac{b'+c'}{2}\right)\cdot\int_{R_s} n dS\\
&\hspace{.5in}= \left(\frac{\theta'/2}{\cos(\theta'/2)}-\sin(\theta'/2)\right)2\sin(\phi'/2)\left[\cos(\phi'/2)  \left(\frac{b'+c'}{2}\right)\cdot\left(b-\frac{b'+c'}{2}\right) \right.\\
&\hspace{3.25in}\left.+\sin(\phi'/2) \left(\frac{b'+c'}{2}\right)\cdot v\times \left(b-\frac{b'+c'}{2}\right)\right].
\end{align}

\subsection{Special coordinates} 
We will make the expressions
$$
\left(\frac{b'+c'}{2}\right)\cdot\left(b-\frac{b'+c'}{2}\right) \quad \text{and}\quad \left(\frac{b'+c'}{2}\right)\cdot v\times \left(b-\frac{b'+c'}{2}\right)
$$
more explicit using the specific coordinates \eqref{SpecialCoordinates}. First note
\begin{align}
\frac{b'+c'}{2}&=\sqrt{1-a^2}\left(\frac{1+\cos(\phi)}{2},\frac{\sin(\phi)}{2},0\right)=\sqrt{1-a^2}\cos(\phi/2)\left(\cos(\phi/2),\sin(\phi/2),0\right).
\end{align}
It follows that 
$$
b-\frac{b'+c'}{2}=\left(-\sqrt{1-a^2}\cos(\phi/2)\cos(\phi/2),-\sqrt{1-a^2}\cos(\phi/2)\sin(\phi/2),a\right),
$$
and 
\begin{align}
\left(\frac{b'+c'}{2}\right)\cdot\left(b-\frac{b'+c'}{2}\right)&=-(1-a^2)\cos(\phi/2)^2=-\cos(\theta/2)^2\cos(\phi/2)^2.
\end{align}

\par We also have 
\begin{align}
\frac{b'-c'}{2}&=\sqrt{1-a^2}\left(\frac{1-\cos(\phi)}{2},-\frac{\sin(\phi)}{2},0\right)=\sqrt{1-a^2}\sin(\phi/2)\left(\sin(\phi/2),-\cos(\phi/2),0\right).
\end{align}
This implies 
$$
v=\frac{b'-c'}{|b'-c'|}=\left(\sin(\phi/2),-\cos(\phi/2),0\right).
$$
In addition, a direct computation gives 
$$
v\times \left(b-\frac{b'+c'}{2}\right)=
\left(-a\cos(\phi/2) , -a\sin(\phi/2) , -\sqrt{1-a^2}\cos(\phi/2) \right).
$$
As a consequence, 
\begin{align}
&\left(\frac{b'+c'}{2}\right)\cdot v\times \left(b-\frac{b'+c'}{2}\right) =-a\sqrt{1-a^2}\cos(\phi/2) =-\sin(\theta/2)\cos(\theta/2)\cos(\phi/2).
\end{align}

\par Combining these computations leads to 
 \begin{align}
&2\sin(\phi'/2)\left[\cos(\phi'/2)  \left(\frac{b'+c'}{2}\right)\cdot\left(b-\frac{b'+c'}{2}\right) +\sin(\phi'/2) \left(\frac{b'+c'}{2}\right)\cdot v\times \left(b-\frac{b'+c'}{2}\right)\right]\\
&=-2\sin(\phi'/2) \cos(\theta/2)\cos(\phi/2)\Big(\cos(\phi'/2)\cos(\theta/2)\cos(\phi/2)+\sin(\phi'/2)\sin(\theta/2)\Big)\\
& =-2\frac{\sin(\theta/2)}{\cos(\theta'/2)} \cos(\theta/2)\cos(\phi/2)\Big(\cos(\phi'/2)\cos(\theta/2)\cos(\phi/2)+\frac{\sin(\theta/2)}{\cos(\theta'/2)}\sin(\theta/2)\Big)\\
& =-2\frac{\sin(\theta/2)}{\cos(\theta'/2)^2} \cos(\theta/2)\cos(\phi/2)\Big(\cos(\theta/2)^2\cos(\phi/2)^2+\sin(\theta/2)^2\Big)\\
& =-2\frac{\sin(\theta/2)}{\cos(\theta'/2)^2} \cos(\theta/2)\cos(\phi/2)\Big(\cos(\theta/2)^2(1-\sin(\phi/2)^2)+\sin(\theta/2)^2\Big)\\
& =-2\frac{\sin(\theta/2)}{\cos(\theta'/2)^2} \cos(\theta/2)\cos(\phi/2)\Big(1-\cos(\theta/2)^2\sin(\phi/2)^2\Big)\\
& =-2\frac{\sin(\theta/2)}{\cos(\theta'/2)^2} \cos(\theta/2)\cos(\phi/2)\cos(\theta'/2)^2\\
& =-2\sin(\theta/2) \cos(\theta/2)\cos(\phi/2).
\end{align}
Note that we used the identity \eqref{MidPointIdentity} in the third equality above. It follows that 
\begin{align}
\left(\frac{b'+c'}{2}\right)\cdot\int_{R_s}n dS&=2\left(\sin(\theta'/2)-\frac{\theta'/2}{\cos(\theta'/2)}\right)\sin(\theta/2) \cos(\theta/2)\cos(\phi/2)\\
&=2\sin(\theta'/2)\sin(\theta/2) \cos(\theta/2)\cos(\phi/2)-\theta'\sin(\theta/2)\frac{\cos(\theta/2)\cos(\phi/2)}{\cos(\theta'/2)}\\
&=2\sin(\theta'/2)\sin(\theta/2) \cos(\theta/2)\cos(\phi/2)-\theta'\sin(\theta/2)\cos(\phi'/2).
\end{align}

\par In summary,  
\begin{align}\label{IntComp3}
\int_{R_s}x\cdot ndS&=-\phi'\Big(3\sin(\theta'/2)-3\cos(\theta'/2)\;\theta'/2 -\sin(\theta'/2)^3\Big)\\
&\quad\quad +2\sin(\theta'/2)\sin(\theta/2) \cos(\theta/2)\cos(\phi/2)-\theta'\sin(\theta/2)\cos(\phi'/2)
\end{align}
in the coordinate system \eqref{SpecialCoordinates}.

\section{Proof of the Wedge Lemma}\label{ProofSect}
This section is dedicated to the proof of the Wedge Lemma; as previously noted, our volume formula \eqref{VolumeFormula} follows directly from 
\eqref{CrucialhIdentity}.  In view of \eqref{IntComp1}, \eqref{IntComp2}, and \eqref{IntComp3},
\begin{align}
&V(W(e'))\\
&=\frac{1}{3}\int_{R_b}x\cdot ndS+\frac{1}{3}\int_{R_c}x\cdot ndS+\frac{1}{3}\int_{R_s}x\cdot ndS\\
&=\frac{2}{3}\int_{R_c}x\cdot ndS+\frac{1}{3}\int_{R_s}x\cdot ndS\\
&=\frac{2}{3}\left(2\sin^{-1}\Big(\tan(\theta/2)\tan(\theta'/2)\Big)-\frac{3}{2}\phi\sin(\theta/2)+\sin(\theta/2)^3\frac{\phi}{2}+\sin(\theta/2)\cos(\phi'/2)\frac{\theta'}{2}\right)\\
&\quad-\phi'\left(\sin(\theta'/2)-\cos(\theta'/2)\;\theta'/2 -\frac{1}{3}\sin(\theta'/2)^3\right)\\
&\quad\quad +\frac{2}{3}\sin(\theta'/2)\sin(\theta/2) \cos(\theta/2)\cos(\phi/2)-\frac{1}{3}\theta'\sin(\theta/2)\cos(\phi'/2)\\
&=\frac{1}{2}f(\theta,\theta')+\frac{4}{3}\sin^{-1}\Big(\tan(\theta/2)\tan(\theta'/2)\Big)-\phi\left(\sin(\theta/2)-\frac{1}{3}\sin(\theta/2)^3\right)\\
&\quad\quad-\phi'\left(\sin(\theta'/2)-\frac{1}{3}\sin(\theta'/2)^3\right) +\frac{2}{3}\sin(\theta'/2)\sin(\theta/2) \cos(\theta/2)\cos(\phi/2).
\end{align}
We used $f(\theta,\theta')=\phi'\theta'\cos(\theta'/2)$ in the last equality. 

\par Recall 
$$
\sin(\phi/2)=\frac{\sin(\theta'/2)}{\cos(\theta/2)}\quad \text{and}\quad \sin(\phi'/2)=\frac{\sin(\theta/2)}{\cos(\theta'/2)}.
$$
It follows that 
\be\label{coscosIdentityAgain}
 \cos(\theta/2)\cos(\phi/2)= \cos(\theta/2)\sqrt{1-\sin(\phi/2)^2}=\sqrt{1-\sin(\theta/2)^2-\sin(\theta'/2)^2}
\ee
and 
\begin{align}
&V(W(e'))\\
&=\frac{1}{2}f(\theta,\theta')+\frac{4}{3}\sin^{-1}\Big(\tan(\theta/2)\tan(\theta'/2)\Big)-\phi\left(\sin(\theta/2)-\frac{1}{3}\sin(\theta/2)^3\right)\\
&\quad\quad-\phi'\left(\sin(\theta'/2)-\frac{1}{3}\sin(\theta'/2)^3\right) +\frac{2}{3}\sin(\theta'/2)\sin(\theta/2)\sqrt{1-\sin(\theta/2)^2-\sin(\theta'/2)^2}\\
 &=\frac{1}{2}f(\theta,\theta')-\frac{1}{2}h(\theta,\theta').
\end{align}

\appendix
\section{Volume and surface area inequality}
We will show that $h(\theta,\theta')> g(\theta,\theta')$ and derive the inequality \eqref{AlmostBlaschkeIneq} between the volume and surface area of a Reuleaux polyhedron. 
\begin{lem}
For each $\theta,\theta'\in (0,\pi/3)$, $h(\theta,\theta')> g(\theta,\theta')$.
\end{lem}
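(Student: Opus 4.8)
The plan is to reduce the claim to the elementary inequality $\tan\zeta>\zeta$ on $(0,\pi/2)$ by a single substitution followed by one differentiation. Write $a=\sin(\theta/2)$ and $b=\sin(\theta'/2)$; since $\theta,\theta'\in(0,\pi/3)$ these range over $(0,1/2)$, so in particular $a^2+b^2<1$ and every arcsine appearing below has argument in $(0,1)$. Introduce the angles
$$\alpha=\sin^{-1}\!\left(\frac{b}{\sqrt{1-a^2}}\right),\qquad \beta=\sin^{-1}\!\left(\frac{a}{\sqrt{1-b^2}}\right),\qquad \gamma=\sin^{-1}\!\left(\frac{ab}{\sqrt{(1-a^2)(1-b^2)}}\right),$$
which are exactly the arcsine arguments occurring in $g$ and $h$ (note $\gamma$ is $\sin^{-1}(\tan(\theta/2)\tan(\theta'/2))$). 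Substituting into the definitions of $g$ and $h$, the three terms of $g$ cancel the ``linear'' pieces of $h$, leaving only the cubic corrections, so that
$$h-g=\tfrac{4}{3}\,\Phi(a,b),\qquad \Phi(a,b):=\gamma-a^3\alpha-b^3\beta-ab\sqrt{1-a^2-b^2}.$$
Hence it suffices to prove $\Phi(a,b)>0$ on $(0,1/2)^2$. Note that $\Phi$ is symmetric in $(a,b)$ and that $\Phi(a,0)=0$.

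The key step is to differentiate $\Phi$ in $b$ with $a$ held fixed and exploit a large cancellation. Using
$$\partial_b\gamma=\frac{a}{(1-b^2)\sqrt{1-a^2-b^2}},\qquad \partial_b\alpha=\frac{1}{\sqrt{1-a^2-b^2}},\qquad \partial_b\beta=\frac{ab}{(1-b^2)\sqrt{1-a^2-b^2}},$$
together with $\partial_b\big(ab\sqrt{1-a^2-b^2}\big)=a(1-a^2-2b^2)/\sqrt{1-a^2-b^2}$, I observe that every contribution to $\partial_b\Phi$ except the $3b^2\beta$ coming from $\partial_b(b^3\beta)$ carries the factor $1/\sqrt{1-a^2-b^2}$. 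Placing these over the common denominator $(1-b^2)\sqrt{1-a^2-b^2}$, the polynomial numerator collapses and one obtains the clean identity
$$\partial_b\Phi=3b^2\left[\frac{a}{\sqrt{1-a^2-b^2}}-\sin^{-1}\!\left(\frac{a}{\sqrt{1-b^2}}\right)\right].$$

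To finish, I would show the bracket is positive. Setting $r=a/\sqrt{1-b^2}\in(0,1)$ and $\zeta=\sin^{-1}r\in(0,\pi/2)$, one has $a/\sqrt{1-a^2-b^2}=r/\sqrt{1-r^2}=\tan\zeta$, so the bracket equals $\tan\zeta-\zeta>0$ by the standard inequality $\tan\zeta>\zeta$ on $(0,\pi/2)$. Therefore $\partial_b\Phi>0$ for $b>0$, and integrating from $\Phi(a,0)=0$ gives $\Phi(a,b)=\int_0^b\partial_b\Phi(a,s)\,ds>0$, which is precisely the strict inequality $h>g$.

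The only genuine obstacle is the algebraic cancellation that produces $\partial_b\Phi$: one must verify that the numerator $a-a^3(1-b^2)-ab^4-a(1-a^2-2b^2)(1-b^2)$ reduces to $3ab^2(1-b^2)$, which is a routine polynomial expansion. Everything downstream is then immediate, so I expect no serious difficulty beyond bookkeeping, and the strictness of the conclusion is automatic since $\tan\zeta>\zeta$ is strict for $\zeta>0$.
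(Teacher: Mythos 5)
Your proof is correct, and it takes a genuinely different --- and in fact more robust --- route than the paper's. The paper's own argument consists of asserting the identity
\begin{equation}
h-g=\frac{4}{3}\left(\sin^{-1}\Big(\tan(\tfrac{\theta}{2})\tan(\tfrac{\theta'}{2})\Big)-\sin(\tfrac{\theta}{2})\sin(\tfrac{\theta'}{2})\sqrt{1-\sin(\tfrac{\theta}{2})^{2}-\sin(\tfrac{\theta'}{2})^{2}}\right)
\end{equation}
and then applying $\sin^{-1}(x)>x$ once. However, that identity drops the two negative terms $-\tfrac{4}{3}a^{3}\alpha-\tfrac{4}{3}b^{3}\beta$ (in your notation $a=\sin(\theta/2)$, $b=\sin(\theta'/2)$) that survive in $h-g$; your decomposition $h-g=\tfrac{4}{3}\Phi(a,b)$ with $\Phi=\gamma-a^{3}\alpha-b^{3}\beta-ab\sqrt{1-a^{2}-b^{2}}$ is the correct one: at $\theta=\theta'=\pi/3$ one computes $h-g\approx 0.0123=\tfrac{4}{3}\Phi$, whereas the paper's right-hand side evaluates to about $0.217$. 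Because the omitted terms are negative and $\Phi$ is small, the paper's one-line estimate does not transfer to the corrected identity, so a finer argument is genuinely needed, and yours supplies it: the derivative formulas for $\alpha$, $\beta$, $\gamma$ are right, the numerator $a-a^{3}(1-b^{2})-ab^{4}-a(1-a^{2}-2b^{2})(1-b^{2})$ does collapse to $3ab^{2}(1-b^{2})$, the bracket is exactly $\tan\zeta-\zeta>0$, and integrating from $\Phi(a,0)=0$ gives the strict inequality. What your monotonicity argument buys is a proof that actually closes, and one valid on the whole region $a^{2}+b^{2}<1$ (hence also at the endpoint $\theta=\theta'=\pi/3$ relevant to the regular tetrahedron); what it costs is only the polynomial bookkeeping you already flagged, which checks out.
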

\begin{proof}
As $\sin^{-1}(x)>x$ for $x\in (0,1)$, 
\begin{align}
&h(\theta,\theta')-g(\theta,\theta')\\
&=\frac{4}{3}\left(\sin^{-1}\Big(\tan(\theta/2)\tan(\theta'/2)\Big)-\sin(\theta'/2)\sin(\theta/2)\sqrt{1-\sin(\theta/2)^2-\sin(\theta'/2)^2} \right)\\
&>\frac{4}{3}\left(\tan(\theta/2)\tan(\theta'/2)-\sin(\theta'/2)\sin(\theta/2)\sqrt{1-\sin(\theta/2)^2-\sin(\theta'/2)^2} \right)\\
&=\frac{4}{3}\tan(\theta/2)\tan(\theta'/2)\left(1-\cos(\theta'/2)\cos(\theta/2)\sqrt{1-\sin(\theta/2)^2-\sin(\theta'/2)^2} \right)\\
&>0.
\end{align}
\end{proof}
\begin{prop}
Suppose $X\subset \R^3$ is extremal. Then 
$$
V(B(X))<\frac{1}{2}S(B(X))-\frac{\pi}{3}.
$$
\end{prop}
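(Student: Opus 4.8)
The plan is to reduce the claimed inequality directly to the comparison lemma $h>g$ just proved, applied term by term over the dual edge pairs. First I would write both sides of the inequality explicitly in terms of the two formulas already on hand. By the Volume Formula \eqref{VolumeFormula},
\[
V(B(X))=\frac{2\pi}{3}-\frac{1}{2}\sum_{i=1}^{m-1}h(\theta(e_i),\theta(e_i')),
\]
whereas Bogosel's surface area formula gives
\[
\frac{1}{2}S(B(X))-\frac{\pi}{3}
=\frac{1}{2}\Bigl(2\pi-\sum_{i=1}^{m-1}g(\theta(e_i),\theta(e_i'))\Bigr)-\frac{\pi}{3}
=\frac{2\pi}{3}-\frac{1}{2}\sum_{i=1}^{m-1}g(\theta(e_i),\theta(e_i')).
\]
Subtracting, the target inequality $V(B(X))<\tfrac12 S(B(X))-\tfrac{\pi}{3}$ becomes equivalent to
\[
\sum_{i=1}^{m-1}\bigl[h(\theta(e_i),\theta(e_i'))-g(\theta(e_i),\theta(e_i'))\bigr]>0,
\]
so the problem collapses to a sign statement about each summand.

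Next I would invoke the preceding lemma, which asserts $h(\theta,\theta')>g(\theta,\theta')$, making every summand strictly positive. Since $X$ is extremal with $m\ge 4$ points, $B(X)$ has $m-1\ge 3$ dual edge pairs, so the sum is nonempty and the strict inequality is genuine rather than vacuous. With that, the conclusion is immediate.

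The only point requiring care—and hence the sole obstacle—is the range of the arguments: the angles satisfy $\theta(e_i),\theta(e_i')\in(0,\pi/3]$, while the lemma is stated on the open square $(0,\pi/3)^2$, and the regular tetrahedron, for which $\theta=\theta'=\pi/3$, sits exactly on the boundary. I would therefore check that the lemma's argument persists at the endpoint. Both of its ingredients do: since $\tan(\theta/2)\tan(\theta'/2)\le\tan(\pi/6)^2=\tfrac13<1$ on $(0,\pi/3]^2$, the bound $\sin^{-1}(x)>x$ still applies to $x=\tan(\theta/2)\tan(\theta'/2)$, and since $\sin(\theta/2)^2+\sin(\theta'/2)^2\le\tfrac12$ there, together with $\cos(\theta/2)\cos(\theta'/2)<1$ whenever $\theta,\theta'>0$, the factor $1-\cos(\theta/2)\cos(\theta'/2)\sqrt{1-\sin(\theta/2)^2-\sin(\theta'/2)^2}$ remains strictly positive. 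Thus $h>g$ holds on the full closed range $(0,\pi/3]^2$ of the angles, the summands are all positive, and the proposition follows.
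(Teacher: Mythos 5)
Your proof is correct and follows essentially the same route as the paper: express $V(B(X))$ via the Volume Formula and $\tfrac12 S(B(X))-\tfrac{\pi}{3}$ via Bogosel's surface area formula, then compare termwise using the lemma $h>g$. Your additional check that the lemma's argument persists at the endpoint $\theta=\theta'=\pi/3$ (which the paper's lemma, stated on the open square $(0,\pi/3)^2$, technically omits even though it is attained by the regular tetrahedron) is a worthwhile refinement and is carried out correctly.
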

\begin{proof}
Suppose  $|X|=m$ and $(e_1,e_1'),\dots, (e_{m-1},e_{m-1}')$  are the dual edge pairs of $B(X)$. 
Then 
\begin{align}
V(B(X))&=\frac{2\pi}{3}-\frac{1}{2}\sum^{m-1}_{i=1}h(\theta(e_i),\theta(e_i'))\\
&< \frac{2\pi}{3}-\frac{1}{2}\sum^{m-1}_{i=1}g(\theta(e_i),\theta(e_i'))\\
&=\frac{1}{2}\left(2\pi-\sum^{m-1}_{i=1}g(\theta(e_i),\theta(e_i')) \right)-\frac{\pi}{3}\\
&=\frac{1}{2}S(B(X))-\frac{\pi}{3}.
\end{align}
\end{proof}


\bibliography{volbib}{}

\bibliographystyle{plainurl}

\typeout{get arXiv to do 4 passes: Label(s) may have changed. Rerun}

\end{document}